\documentclass[11pt,reqno]{amsart}
\usepackage[margin=0.85in]{geometry}
\usepackage{amsmath,amssymb}
\usepackage{microtype}
\usepackage[hidelinks]{hyperref}

\newcommand{\Irr}{\operatorname{Irr}}

\numberwithin{equation}{section}
\newtheorem{theorem}{Theorem}[section]
\newtheorem{lemma}[theorem]{Lemma}

\title{Zeros and Roots of Unity for Characters of Solvable Groups}
\author{Fang Bian}
\address{Faculty of Mathematics and Statistics, Hubei University, 368 Youyi Avenue, Wuchang District, Wuhan, Hubei 430062, China}
\email{bianfang1998@hotmail.com}
\author{Yong Yang}
\address{Department of Mathematics, Texas State University, San Marcos, TX 78666, USA}
\email{yang@txstate.edu}

\keywords{irreducible character, zero of a character, root of unity, solvable group}
\subjclass[2020]{20C15}

\begin{document}

\begin{abstract}
We prove Miller's conjectured bound for solvable groups: if $G$ is a finite solvable group and $\chi\in\Irr(G)$, then $\chi(g)$ is zero or a root of unity for at least half of the elements $g\in G$. We also prove the same bound for monomial irreducible characters of arbitrary finite groups.
\end{abstract}

\maketitle

\section{Introduction}

Let $G$ be a finite group and let $\chi\in\Irr(G)$. Define
\[
\lambda(\chi)=\frac{1}{|G|}\bigl|\{g\in G:\chi(g)=0\text{ or }\chi(g)\text{ is a root of unity}\}\bigr|.
\]
Miller conjectured that $\lambda(\chi)\geq 1/2$ for every finite group and every irreducible character \cite{Miller}. The conjecture appears as Conjecture~7.5 in Navarro's survey on characters of solvable groups \cite{NavarroSurvey}. Moret\'o and Navarro proved the bound for groups with a Sylow tower \cite{MoretoNavarro}. Navarro and Sambale later gave a counterexample for arbitrary finite groups \cite{NavarroSambale}; their example is nonsolvable. We prove that the bound holds for every finite solvable group.

\begin{theorem}\label{thm:main}
Let $G$ be a finite solvable group and let $\chi\in\Irr(G)$. Then
\[
\lambda(\chi)\geq \frac12.
\]
\end{theorem}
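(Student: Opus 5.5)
The plan is induction on $|G|$, combining two tools. The first is a reduction through a normal subgroup of prime index. The second is an arithmetic inequality that controls the elements $g$ where $\chi(g)$ is neither zero nor a root of unity; call these \emph{bad} elements. If $\chi(1)=1$, every value is a root of unity and $\lambda(\chi)=1$, so assume $\chi$ is nonlinear. Then $G$ is nonabelian and solvable, so it has a normal subgroup $M$ of prime index $p$.

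\emph{Step 1 (the induced case).} Suppose that for some such $M$ the restriction $\chi_M$ is reducible. By Clifford theory, $\chi=\theta^G$ for some $\theta\in\Irr(M)$. Then $\chi$ vanishes on $G\setminus M$, so $\lambda(\chi)\ge (p-1)/p\ge 1/2$. More generally, I would push this through a chief series. Whenever $\chi$ is induced from a proper inertia subgroup $T$ of some $\theta\in\Irr(L)$ with $L\trianglelefteq G$, the set where $\chi\neq0$ is contained in $\bigcup_x T^x$. I would try to show that this union, together with the induction hypothesis applied to $\psi\in\Irr(T)$ with $\psi^G=\chi$, yields the bound. Since $\psi^G$ restricted to $T$ involves the conjugates of $\psi$, this part needs a counting argument over the conjugates of $T$, not just a naive union bound.

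\emph{Step 2 (the arithmetic inequality).} For every $g$, $\chi(g)$ is an algebraic integer in $\mathbb{Q}(\zeta_{|G|})$, and its Galois conjugates are the values $\chi(g^k)$ with $(k,|G|)=1$. If $\chi(g)\neq0$, the product of $|\chi(g^k)|^2$ over a full Galois orbit is a nonzero rational integer, hence at least $1$. By AM--GM the average of $|\chi|^2$ over the orbit is then at least $1$, with equality only if all conjugates have absolute value $1$. In that case Kronecker's theorem forces $\chi(g)$ to be a root of unity. So on bad Galois classes the average of $|\chi|^2$ is strictly greater than $1$, and I would aim for a quantitative version: the average is at least $2$, or at least something comparable. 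Combined with $\sum_{g\in G}|\chi(g)|^2=|G|$, this gives $\#\{\text{bad}\}\cdot c+\#\{\text{roots of unity}\}\le |G|$ with $c>1$. That alone is not sufficient, so the aim is to use it only in the remaining primitive-type case.

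\emph{Step 3 (the quasi-primitive case, the main obstacle).} Now suppose $\chi_M$ is irreducible for every normal subgroup $M$ of prime index, and more generally that $\chi$ is quasi-primitive. Then, by the standard structure theory for quasi-primitive characters of solvable groups, the normal subgroups on which $\chi$ is homogeneous lead to a fully ramified section $K/L$ with $\chi_K=e\varphi$ and $e^2=|K:L|$. In this case $\chi$ vanishes on a large part of $K\setminus L$, since fully ramified characters vanish off $L$. I expect this to be the hardest step. The plan is to combine three ingredients: this vanishing, the induction hypothesis applied to the character triple $(G/L,\ldots)$ via Gallagher-type correspondences, and the Galois/second-moment bound of Step 2 to absorb the remaining bad elements. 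I would first test the whole scheme on extraspecial-type examples such as $\chi$ of degree $2^n$ on $2^{1+2n}$ extended by a group acting on $K/L$, since these are exactly the groups where neither pure vanishing nor pure Galois counting suffices.
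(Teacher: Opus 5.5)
Your proposal is a program rather than a proof, and it breaks exactly where it has to do real work.

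\textbf{The inductive invariant is wrong.} In the general form of Step~1 you want to feed the induction hypothesis $\lambda(\psi)\ge 1/2$ for $\psi\in\Irr(T)$ into $\chi=\psi^G$. But this property is not preserved by induction of characters. For $g\in T$, $\chi(g)=\sum\psi(t^{-1}gt)$ over the coset representatives $t$ with $t^{-1}gt\in T$. A sum of several roots of unity is in general neither $0$ nor a root of unity, and zeros of $\psi$ need not be zeros of $\chi$. For example, take $\psi$ a faithful linear character of $T=C_5$ and $G$ the dihedral group $D_{10}$. Then $\lambda(\psi)=1$, yet $\chi=\psi^G$ takes no value $0$ or root of unity anywhere on $T$. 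No counting over conjugates of $T$ repairs this without a stronger, pointwise hypothesis on $\psi$.

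\textbf{The constant in Step~2 fails for solvable groups.} For this same degree-$2$ character $\chi$ of $D_{10}$, the rotations take the values $2\cos(2\pi/5)$ and $2\cos(4\pi/5)$. The average of $|\chi|^2$ over that rational class is $3/2$, so the best uniform constant is at most $c=3/2$. With $B$ and $Z$ the numbers of bad elements and zeros, your inequality $(c-1)B\le Z$ then gives only $\lambda(\chi)\ge 1/3$.

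\textbf{Step~3 is not carried out, and the case split leaves a hole.} Irreducibility of $\chi_M$ for every normal $M$ of prime index does not imply quasi-primitivity. The degree-$3$ characters of $S_4$ restrict irreducibly to $A_4$ but not homogeneously to the Klein four-group. So the intermediate cases fall between Steps~1 and~3.

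\textbf{The missing idea: pointwise integrality, placed on the right character.} The paper writes $\chi=\beta^G$ with $|H|$ minimal, so $\beta\in\Irr(H)$ is primitive, hence quasi-primitive. It then invokes Wilde's theorem for quasi-primitive characters of solvable groups: $|\beta(h)|^2$ is $0$ or a positive integer for every $h\in H$.

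It runs your second-moment argument not on $|\chi|^2$ but on $\rho=(\beta\overline\beta)^G$. This $\rho$ is nonnegative and integer-valued, and Frobenius reciprocity gives $\sum_{g\in G}\rho(g)=|G|$. Writing $n_i=|\{g:\rho(g)=i\}|$, this gives $n_0=\sum_{i\ge2}(i-1)n_i\ge\sum_{i\ge2}n_i$, hence $n_0+n_1\ge|G|/2$.

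If $\rho(g)=0$ then $\chi(g)=0$. If $\rho(g)=1$, the induction formula for $\chi(g)$ has exactly one nonzero term, of absolute value $1$. That term is a root of unity by your own Kronecker argument.

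So integrality supplies precisely your constant $c=2$, but only after it is transported through $\rho$ rather than through $|\chi|^2$, where it fails (as $D_{10}$ shows). This one step replaces both your Step~1 induction and your Step~3 analysis, and no induction on $|G|$ is needed.
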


We use the following notation. If $H\leq G$ and $\theta$ is a character of $H$, then $\theta^G$ denotes the character induced from $H$ to $G$. We write $1_H$ for the principal character of $H$, $\overline\theta$ for the complex conjugate character, and $[\ ,\ ]_H$ for the usual inner product of class functions on $H$. Also, $\mathbb Z_{>0}$ denotes the positive integers.

An irreducible character $\beta\in\Irr(H)$ is \emph{primitive} if it is not induced from a character of a proper subgroup of $H$. It is \emph{quasiprimitive} if $\beta_N$ is homogeneous for every normal subgroup $N$ of $H$.

We first prove an induction lemma.

\section{Irreducible induction}

\begin{lemma}\label{lem:induction}
Let $H\leq G$, let $\beta\in\Irr(H)$, and suppose that $\chi=\beta^G\in\Irr(G)$. Assume that
\[
\beta(h)=0\quad\text{or}\quad |\beta(h)|^2\in\mathbb Z_{>0}
\]
for every $h\in H$. Then $\lambda(\chi)\geq 1/2$.
\end{lemma}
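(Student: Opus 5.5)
The plan is a counting argument based on the nonnegative class function $(\beta\overline\beta)^G$, rather than on $|\chi|^2$ directly. Write $T$ for the set of $g\in G$ such that $\chi(g)\neq0$ and $\chi(g)$ is not a root of unity. Since $\lambda(\chi)=1-|T|/|G|$, it suffices to show $|T|\leq |G|/2$.

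\textbf{Setup.} Put $\mu=(\beta\overline\beta)^G$ and fix a right transversal $R$ of $H$ in $G$. Then
\[
\mu(g)=\sum_{x\in R,\ xgx^{-1}\in H}|\beta(xgx^{-1})|^2 .
\]
By hypothesis each summand is $0$ or a positive integer, so $\mu(g)\in\mathbb Z_{\geq0}$ for every $g$. Frobenius reciprocity and the irreducibility of $\beta$ give
\[
\frac1{|G|}\sum_{g\in G}\mu(g)=[\mu,1_G]_G=[\beta\overline\beta,1_H]_H=[\beta,\beta]_H=1 .
\]
Hence $\sum_{g\in G}\mu(g)=|G|$, and every term of this sum is nonnegative.

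\textbf{Key step: $\mu(g)\geq2$ on $T$.} Let $g\in T$. We have $\chi(g)=\sum_{x\in R,\ xgx^{-1}\in H}\beta(xgx^{-1})$, and $\chi(g)\neq0$, so some summand is nonzero. That summand contributes at least $1$ to $\mu(g)$, so $\mu(g)\geq1$.

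Suppose, for a contradiction, that $\mu(g)=1$. Then exactly one term of $\mu(g)$ is nonzero, and it equals $1$. Consequently $\chi(g)$ equals a single value $\beta(xgx^{-1})$ of absolute value $1$.

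Now let $\sigma$ be any Galois automorphism of $\mathbb Q(\zeta_{|G|})$, say $\sigma(\zeta)=\zeta^k$ with $k$ coprime to $|G|$. Then $\sigma(\chi(g))=\chi(g^k)$ and $\sigma(\mu(g))=\mu(g^k)$. Since $\mu(g)=1$ is rational, this gives $\mu(g^k)=1$. Also $g^k$ lies in $H^x$ exactly when $g$ does, and $\chi(g^k)=\sigma(\chi(g))\neq0$. So the same argument applied to $g^k$ gives $|\chi(g^k)|=1$.

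Thus the algebraic integer $\chi(g)$ has all of its Galois conjugates of absolute value $1$. By Kronecker's theorem $\chi(g)$ is a root of unity, contradicting $g\in T$. Therefore $\mu(g)\geq2$ for all $g\in T$.

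\textbf{Conclusion.} Combining the two steps,
\[
2|T|\leq\sum_{g\in T}\mu(g)\leq\sum_{g\in G}\mu(g)=|G| ,
\]
so $|T|\leq|G|/2$ and $\lambda(\chi)\geq1/2$.

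The main obstacle is the key step, specifically the transfer of the condition $\mu=1$ to all Galois conjugates so that Kronecker's theorem applies. This is where the hypothesis on $\beta$ is used: it makes $\mu$ integer-valued and hence fixed by the Galois action. The irreducibility of $\chi$ itself is not needed beyond the irreducibility of $\beta$.
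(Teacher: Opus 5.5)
Your proof is correct and follows the paper's argument: the same auxiliary character $(\beta\overline\beta)^G$, nonnegative-integer-valued with total sum $|G|$, so the elements where it is at least $2$ make up at most half of $G$, and the same appeal to Kronecker's theorem when it equals $1$. The only variation is in showing that all Galois conjugates have modulus $1$. The paper applies each $\sigma$ directly to $\beta(h)\overline{\beta(h)}=1$, using that automorphisms of a cyclotomic field commute with complex conjugation. You instead transfer $\mu(g)=1$ to $\mu(g^k)=1$ via the rationality of $\mu(g)$. Both work. The paper's version shows this step needs no integrality, so the hypothesis on $\beta$ is really needed elsewhere: it makes every nonzero term of $\mu(g)$ at least $1$, which forces $\mu(g)=1$ to come from a single term of modulus $1$.
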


\begin{proof}
Set
\[
\rho=(\beta\overline\beta)^G.
\]
Let $T$ be a set of representatives for the left cosets of $H$ in $G$. The usual formula for induced character values gives, for $g\in G$,
\begin{equation}\label{eq:chi-value}
\chi(g)=\sum_{\substack{t\in T\\ t^{-1}gt\in H}}\beta(t^{-1}gt).
\end{equation}
Applying the same formula to $\rho$ gives
\begin{equation}\label{eq:rho}
\rho(g)=\sum_{\substack{t\in T\\ t^{-1}gt\in H}}|\beta(t^{-1}gt)|^2.
\end{equation}
It follows from the hypothesis on $\beta$ that $\rho(g)$ is a nonnegative integer. Frobenius reciprocity gives
\[
[\rho,1_G]_G=[\beta\overline\beta,1_H]_H=[\beta,\beta]_H=1.
\]
Since the inner product with $1_G$ is the average value of a class function, we have
\[
\sum_{g\in G}\rho(g)=|G|.
\]

For $i\geq0$, let
\[
n_i=|\{g\in G:\rho(g)=i\}|.
\]
The sets in this definition partition $G$, so
\[
|G|=\sum_{i\geq0}n_i.
\]
On the other hand, summing the values of $\rho$ according to these sets gives
\[
|G|=\sum_{g\in G}\rho(g)=\sum_{i\geq0}i n_i.
\]
Subtracting the first equality from the second yields
\[
0=-n_0+\sum_{i\geq2}(i-1)n_i,
\]
and hence
\[
n_0=\sum_{i\geq2}(i-1)n_i.
\]
Since $i-1\geq1$ for every $i\geq2$, it follows that
\[
n_0\geq\sum_{i\geq2}n_i.
\]
Therefore the number of elements for which $\rho$ is $0$ or $1$ is at least the number for which $\rho$ is at least $2$, and so
\begin{equation}\label{eq:half}
n_0+n_1\geq\frac{|G|}{2}.
\end{equation}

Suppose first that $\rho(g)=0$. Every summand in \eqref{eq:rho} is then zero, and \eqref{eq:chi-value} gives $\chi(g)=0$.

Suppose next that $\rho(g)=1$. Since every nonzero summand in \eqref{eq:rho} is a positive integer, exactly one summand is nonzero, and it is equal to $1$. Hence \eqref{eq:chi-value} has exactly one nonzero term. For some $h\in H$ conjugate in $G$ to $g$, we have
\[
\chi(g)=\beta(h),\qquad |\beta(h)|=1.
\]
We use the standard form of Kronecker's theorem stating that an algebraic integer all of whose Galois conjugates have absolute value $1$ is a root of unity. Character values are cyclotomic algebraic integers. Choose a cyclotomic field $K$ containing $\beta(h)$. Since $\operatorname{Gal}(K/\mathbb Q)$ is abelian, each $\sigma\in\operatorname{Gal}(K/\mathbb Q)$ commutes with complex conjugation. Applying $\sigma$ to
\[
\beta(h)\overline{\beta(h)}=1
\]
gives
\[
\sigma(\beta(h))\,\overline{\sigma(\beta(h))}=1.
\]
Every algebraic conjugate of $\beta(h)$ is obtained by restricting such a $\sigma$, so every algebraic conjugate has absolute value $1$. Kronecker's theorem now shows that $\beta(h)$ is a root of unity.

Every element counted by $n_0+n_1$ therefore belongs to the set defining $\lambda(\chi)$, and \eqref{eq:half} proves the lemma.
\end{proof}

Recall that an irreducible character $\chi$ of $G$ is \emph{monomial} if $\chi=\mu^G$ for a linear character $\mu$ of a subgroup of $G$.

\begin{theorem}\label{thm:monomial}
Let $G$ be a finite group and let $\chi\in\Irr(G)$ be monomial. Then
\[
\lambda(\chi)\geq \frac12.
\]
\end{theorem}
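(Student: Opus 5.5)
This follows directly from Lemma~\ref{lem:induction}, applied with $H$ the subgroup and $\beta$ the linear character from which $\chi$ is induced. Since $\chi$ is monomial, there are a subgroup $H\leq G$ and a linear character $\mu$ of $H$ with $\chi=\mu^G$. Because $\chi$ is irreducible and $\mu$ is linear, $\mu\in\Irr(H)$ and $\chi=\mu^G\in\Irr(G)$, so the structural hypothesis of Lemma~\ref{lem:induction} holds with $\beta=\mu$.

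It remains to check the value condition on $\beta$. For every $h\in H$, the value $\mu(h)$ is a root of unity, since $\mu$ is a homomorphism from $H$ into $\mathbb C^\times$ and $h$ has finite order. Hence
\[
|\mu(h)|^2=1\in\mathbb Z_{>0}
\]
for all $h\in H$. So the hypothesis ``$\beta(h)=0$ or $|\beta(h)|^2\in\mathbb Z_{>0}$'' is satisfied, in fact without ever using the zero alternative. Lemma~\ref{lem:induction} then gives $\lambda(\chi)\geq 1/2$ directly.

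No step here is a real obstacle: all the counting work is already done in the lemma. The only points to state explicitly are that a linear character is automatically irreducible, and that it takes root-of-unity values, so its squared modulus is identically $1$.
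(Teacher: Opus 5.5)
Your proof is correct and is exactly the paper's argument: write $\chi=\mu^G$ with $\mu$ linear on $H$, note that $|\mu(h)|=1$ for all $h\in H$, and apply Lemma~\ref{lem:induction} with $\beta=\mu$. One small point: $\mu$ is irreducible simply because it is linear, not because $\chi$ is irreducible, but this does not affect the argument.
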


\begin{proof}
Write $\chi=\mu^G$, where $\mu$ is a linear character of a subgroup $H\leq G$. Since $|\mu(h)|=1$ for every $h\in H$, the hypotheses of Lemma~\ref{lem:induction} are satisfied.
\end{proof}

\section{Solvable groups}

We first note that primitive irreducible characters are quasiprimitive. Let $\beta\in\Irr(H)$ be primitive, and suppose that $N\triangleleft H$. If $\beta_N$ were not homogeneous, choose $\theta\in\Irr(N)$ below $\beta$. Then the inertia group $I_H(\theta)$ would be a proper subgroup of $H$, and Clifford correspondence would express $\beta$ as a character induced from $I_H(\theta)$, contrary to primitivity. Hence $\beta$ is quasiprimitive.

\begin{proof}[Proof of Theorem~\ref{thm:main}]
Choose $H\leq G$ of smallest order such that
\[
\chi=\beta^G
\]
for some $\beta\in\Irr(H)$. We claim that $\beta$ is primitive. Otherwise $\beta=\gamma^H$ for some proper subgroup $K<H$ and some $\gamma\in\Irr(K)$. By transitivity of induction,
\[
\chi=(\gamma^H)^G=\gamma^G,
\]
contrary to the minimal choice of $H$. Thus $\beta$ is primitive, and hence quasiprimitive by the preceding paragraph.

Since $G$ is solvable, so is $H$. Therefore the hypotheses of Wilde \cite[Theorem B]{Wilde} are satisfied. Hence, if $h\in H$ and $\beta(h)\neq0$, then $|\beta(h)|^2$ is a positive integer dividing $\beta(1)^2$. Thus Lemma~\ref{lem:induction} gives
\[
\lambda(\chi)\geq\frac12.
\]
\end{proof}

\section*{Acknowledgements}
The second author was supported in part by Simons Foundation Grant \#918096.

\section*{Disclosure Statement}
The authors report no conflict of interest.

\section*{Data Availability Statement}
No data were used in this work.

\end{document}